\documentclass{article}
\usepackage{amsmath,amssymb,amsfonts,amsthm,mathrsfs}
\usepackage{nameref} 
\usepackage{hyperref}
\usepackage{xcolor}
\usepackage{graphicx} 
\usepackage{mathtools} 
\usepackage{cleveref} 
\usepackage[margin=20pt]{caption} 

\usepackage{thmtools, thm-restate} 

\usepackage[style=alphabetic,sorting=nyt,maxbibnames=7,maxcitenames=5, backend=biber, style=alphabetic]{biblatex}
\newtheorem{thm}{Theorem}[section]

\newtheorem{prop}[thm]{Proposition}

\theoremstyle{definition}

\newtheorem{rem}[thm]{Remark}
\AtEndEnvironment{rem}{\null\hfill\ensuremath{\triangle}}

\newcommand{\NN}{\mathbb N}
\newcommand{\RR}{\mathbb R}

\newcommand{\Mcg}{\operatorname{MCG}} 

\newcommand\extrafootertext[1]{%
    \bgroup
    \renewcommand\thefootnote{\fnsymbol{footnote}}%
    \renewcommand\thempfootnote{\fnsymbol{mpfootnote}}%
    \footnotetext[0]{#1}%
    \egroup
}

\title{All knots are trivial: a ``proof'' by sleight of hand}

\author{Raphael Appenzeller$^1$ \and José Pedro Quintanilha$^1$}
\date{$^1$Universität Heidelberg\\ \today}

\begin{document}

\maketitle

\extrafootertext{2020 \textit{Mathematics Subject Classification}: 57K10, 00A08.}
\extrafootertext{\textit{Keywords}: knots, potholder diagrams, magic trick}

\begin{abstract}
    We take a close look at a classical magic trick performed with a string, where a trivial knot is seemingly isotoped into a trefoil, and generalize it to a family of magic tricks for transforming the unknot into other knots. We encode such a trick by depicting the target knot as a special type of knot diagram, which we call a ``knotholder diagram''. By providing an explicit construction of knotholder diagrams for all knots, we obtain variants of the trick for producing every knot. 
\end{abstract}

\section{Introduction}

On March 26, 1996, the Canadian science documentary series \emph{The Nature of Things with David Suzuki} dedicated an episode to the legacy of Martin Gardner and his endless trove of mathematical tricks, puzzles and games. One of the guests featured in this documentary was John Conway, who, at a certain point, executes a magic trick with a string, where he seems to show that the trefoil knot is trivial.
An archived version of this documentary is preserved by The Internet Archive \cite{NoT96}, with Conway's charming performance appearing at the~2:53 mark.
We recommend the reader watch this video before proceeding, but in short, Conway begins by holding the ends of an unknotted piece of string, then executes a sequence of hand motions without letting go of the ends of the string, and finally the string is revealed to be knotted into a trefoil knot. But such a maneuver is impossible, since it is well-known that a trivial knot (namely, the one formed by the string and the performer's body) is not isotopic to a trefoil.



The knot theorist bearing witness will no doubt feel the burning urge to look for an explanation, and soon enough a desperate Internet search, or a peek into Kauffman's book \cite[98]{Kau87}, reveals that the secret is, of course, a sleight of hand. Towards the end of the hand motions, the performer's hand sneakily releases the string whilst grabbing it at a nearby location, thus producing a trefoil.

This article arose from the question ``What other knots can one produce from the unknot?'', and we shall answer it with an emphatic ``All of them!''. To even make sense of such a statement, we first suggest a general 3-step scheme for what we call ``knotholder tricks'', which produce knots from an unknotted string via a sleight of hand similar to the classical trick (\Cref{sec:magic}). The precise instructions for executing a knotholder trick can be neatly presented by a special type of knot diagram for the target knot, which we introduce in \Cref{sec:diagrams} under the name ``knotholder diagram''. Knotholder diagrams for the first seven knots on Rolfson's knot table \cite{Rol76} are given in  \Cref{fig:intro_list_of_small_knots}, and two examples for composite knots are given in \Cref{fig:conn_sum}. Videos of the authors performing the corresponding tricks are available online.\footnote{\url{https://tube.mathe.social/w/p/2M22RF21tkUwC8Hm4Cr8XM}}

\begin{figure}[h!]
    \centering
    \def\svgwidth{1.0\linewidth}
\begingroup%
  \makeatletter%
  \providecommand\color[2][]{%
    \errmessage{(Inkscape) Color is used for the text in Inkscape, but the package 'color.sty' is not loaded}%
    \renewcommand\color[2][]{}%
  }%
  \providecommand\transparent[1]{%
    \errmessage{(Inkscape) Transparency is used (non-zero) for the text in Inkscape, but the package 'transparent.sty' is not loaded}%
    \renewcommand\transparent[1]{}%
  }%
  \providecommand\rotatebox[2]{#2}%
  \newcommand*\fsize{\dimexpr\f@size pt\relax}%
  \newcommand*\lineheight[1]{\fontsize{\fsize}{#1\fsize}\selectfont}%
  \ifx\svgwidth\undefined%
    \setlength{\unitlength}{1002.75594101bp}%
    \ifx\svgscale\undefined%
      \relax%
    \else%
      \setlength{\unitlength}{\unitlength * \real{\svgscale}}%
    \fi%
  \else%
    \setlength{\unitlength}{\svgwidth}%
  \fi%
  \global\let\svgwidth\undefined%
  \global\let\svgscale\undefined%
  \makeatother%
  \begin{picture}(1,0.14136449)%
    \lineheight{1}%
    \setlength\tabcolsep{0pt}%
    \put(0,0){\includegraphics[width=\unitlength,page=1]{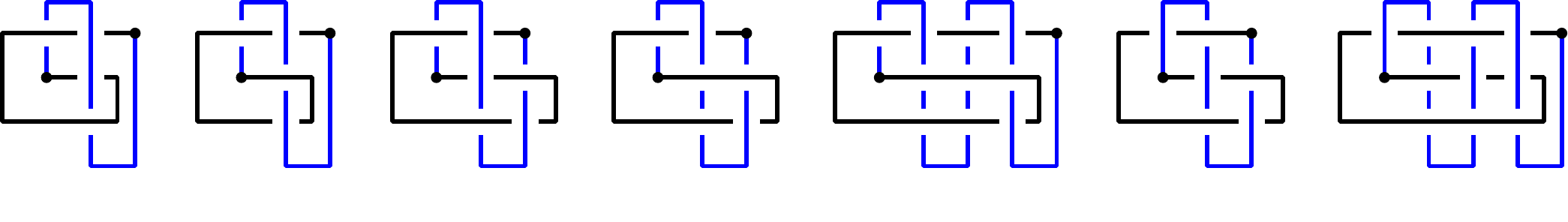}}%
    \put(0.04491434,0.00128552){\color[rgb]{0,0,0}\makebox(0,0)[t]{\lineheight{0.69999999}\smash{\begin{tabular}[t]{c}$3_1$\end{tabular}}}}%
    \put(0.16929596,0.00128552){\color[rgb]{0,0,0}\makebox(0,0)[t]{\lineheight{0.69999999}\smash{\begin{tabular}[t]{c}$4_1$\end{tabular}}}}%
    \put(0.61346556,0.00128552){\color[rgb]{0,0,0}\makebox(0,0)[t]{\lineheight{0.69999999}\smash{\begin{tabular}[t]{c}$6_1$\end{tabular}}}}%
    \put(0.76611573,0.00128552){\color[rgb]{0,0,0}\makebox(0,0)[t]{\lineheight{0.69999999}\smash{\begin{tabular}[t]{c}$6_2$\end{tabular}}}}%
    \put(0.92689328,0.00128552){\color[rgb]{0,0,0}\makebox(0,0)[t]{\lineheight{0.69999999}\smash{\begin{tabular}[t]{c}$6_3$\end{tabular}}}}%
    \put(0.3025115,0.00128552){\color[rgb]{0,0,0}\makebox(0,0)[t]{\lineheight{0.69999999}\smash{\begin{tabular}[t]{c}$5_1$\end{tabular}}}}%
    \put(0.44385426,0.00128552){\color[rgb]{0,0,0}\makebox(0,0)[t]{\lineheight{0.69999999}\smash{\begin{tabular}[t]{c}$5_2$\end{tabular}}}}%
  \end{picture}%
\endgroup%

    \caption{Knotholder diagrams for the prime knots with at most six crossings and up to mirroring. The knotholder diagram for the trefoil~$3_1$ encodes the classical trefoil trick.}
    \label{fig:intro_list_of_small_knots}
\end{figure}

\begin{figure}[h!]
    \centering
    \def\svgwidth{0.4\linewidth}
    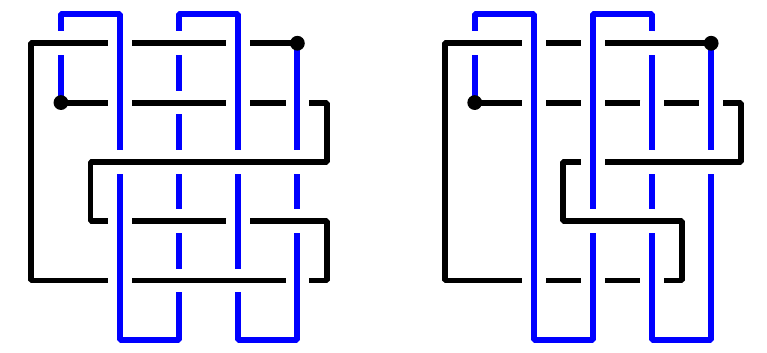
    \caption{Knotholder diagrams for the connected sum of two left-handed trefoils (left), and the connected sum of a trefoil with its mirror, also known as the square knot (right).}
    \label{fig:conn_sum}
\end{figure}

The name ``knotholder'' is a wordplay on a related class of knot diagrams, called ``potholder diagrams'', which were studied by Even-Zohar, Hass, Linial and Nowik \cite{EHL19}. Namely, they have proved that every knot admits a potholder diagram. In \Cref{sec:main_proof}, we recall their construction and explain how, given a potholder diagram, one can always produce a knotholder diagram for the same knot (\Cref{prop:potholder_to_knotholder}). This proves our main result:

\begin{restatable}[Universality of knotholder diagrams]{mainthm}{knotholderuniversality}
\label{thm:main_intro}
Every knot has a knot\-holder diagram.
\end{restatable}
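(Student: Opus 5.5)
The proof has two steps: invoke the universality of potholder diagrams due to Even-Zohar, Hass, Linial and Nowik \cite{EHL19}, then convert a potholder diagram into a knotholder diagram for the same knot.

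Recall that a potholder diagram is a grid-like projection. It consists of two families of parallel strands (say horizontal and vertical) meeting in an $n\times n$ grid of crossings, and the strand ends are joined around the boundary by fixed caps so that the result is a single closed curve. Only the over/under information at the $n^2$ crossings varies. The first step is to recall, or re-derive, their theorem: every knot $K$ admits a potholder diagram. Their argument starts from a braid or a petal-like presentation of $K$. Each crossing of a standard diagram is realised inside a grid, and the surplus crossings are made harmless by choosing them so that they cancel via Reidemeister moves. I would take this as given from \cite{EHL19}, restating the construction only as much as needed to fix conventions such as grid size parity and cap pattern.

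The second step, and the heart of the argument, is \Cref{prop:potholder_to_knotholder}: from any potholder diagram produce a knotholder diagram of the same knot. A knotholder diagram encodes the three-step trick. The string starts as an unknot held at both ends, and a sequence of hand motions (loops wrapped around the hands) creates a braided, grid-like pattern. The sleight of hand then releases one strand and grabs a nearby one, and this reconnection closes up the knot. So a knotholder diagram should be a diagram split into a region that is isotopically trivial before the sleight of hand, together with one local reconnection.

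The plan is to show that the strands of a potholder grid can be produced by the hand motions, with the crossing pattern chosen freely. Each loop placed over the fingers contributes one family of parallel strands, and the order and side of placement determine over/under at each grid crossing. Then I would check that the boundary caps of the potholder pattern match the caps created by the hands together with the single sleight-of-hand reconnection. Where the caps do not match directly, I would modify the potholder diagram by Reidemeister moves near the boundary, for example adding a row or column of crossings that cancel, so that its closing pattern agrees with the knotholder template.

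I expect the main obstacle to be this combinatorial matching: the crossing information achievable by the hand motions must realise every over/under assignment that a potholder diagram may require. If the hand motions only produce restricted patterns, I would fall back on the flexibility in the \cite{EHL19} construction. Their surplus crossings can be chosen freely up to cancellation, so it should suffice to use the freedom in that construction to land inside the achievable class, possibly after enlarging the grid. Combining the two steps then proves that every knot has a knotholder diagram.
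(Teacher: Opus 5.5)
Your outline has the same two steps as the paper: cite \cite{EHL19} for potholder diagrams, then convert a potholder diagram into a knotholder diagram. However, the conversion is the entire content of the proof, and you do not supply it. The mechanism you sketch also aims at the wrong difficulty. A knotholder diagram with $m$ $\cup$-shapes is an \emph{arbitrary} weaver diagram of width $2m$, closed up by a fixed template of $\cup$-shapes below and $\cap$-shapes above the long horizontal segment. Each $\cap$ meets that segment in two crossings of the \emph{same} sign. The weaver diagram records the right hand's upward weaving path, and its over/under data is unconstrained. So the obstacle you expect, that hand motions might realise only restricted over/under patterns in the grid, does not arise. Front/back placement determines only the sign tuple, not the grid crossings.

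The real difficulty is the template. Equal signs mean each $\cap$ passes over the horizontal segment at one crossing and under it at the other, i.e.\ it hooks around it. Your proposed repair, adding a row or column of cancelling crossings near the boundary, produces Reidemeister~II pairs, in which one strand is over at both crossings. Such pairs have opposite signs, which is exactly what the definition excludes. Hooking a cap around the segment needs a global move that drags the rest of the diagram along, and you give no way to control what that does to the grid. Falling back on freely choosable surplus crossings in \cite{EHL19} does not help. That construction goes through meander and standard meander diagrams, not braids or petal presentations, and it fixes the over/under data it introduces. In any case, such freedom would not touch the cap structure.

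The missing idea is to treat the grid region as a weaver tangle and absorb each global move into it through the mapping class group action. The paper first removes the superfluous top-right crossing. It then adds a Reidemeister~I twist at the bottom-left, stretched along the left side. Finally it pulls the top-right corner into a long leftward loop that is over-crossing (or under-crossing, according to $\varepsilon_m$) at every intersection. This yields the long horizontal segment with its left-most crossing of sign $\varepsilon_m$. The region containing the original grid is then a weaver diagram $W_1$ of width $2m$, which is where the potholder width $2m-1$ is used.

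Each further crossing is made by dragging the current right-most vertical strand across the back or front of the whole diagram, as the required sign dictates, to just right of the previous crossing. Inside the box this replaces $W_i$ by $W_{i+1}=b_i\cdot W_i$ for an explicit braid $b_i\in\mathcal B_{2m}$. The braid acts through $\Mcg(D,\{x_1,\ldots,x_{2m}\})$ via $f\times\mathrm{id}$. Since $f\times\mathrm{id}$ preserves height, the weft stays monotone and $W_{i+1}$ is again a weaver tangle. After $2m-1$ moves one has a knotholder diagram for $K$, with any prescribed sign tuple. Your second step needs some such argument guaranteeing that the region left after the global rearrangements is still a weaver diagram.
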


We close with a brief \hyperref[sec:outlook]{outlook}, where we wonder about adapting the ideas in the article to links.

During the publication of this manuscript, an article of Behrends has appeared, which explores a generalization of the classical trick in a different direction \cite{Beh}.

\subsection*{Acknowledgements}
We are grateful to Anna Schilling for supplying the pieces of string on which we experimented throughout this project, and also for pointing out a typo in a draft of this text, to Anja Randecker, who recognised the meander diagrams among our scribbles, leading us to find the work of Even-Zohar--Hass--Linial--Nowik, to Constance Sarrazin for finding the archived version of the documentary where Conway performs the trefoil trick, and to Stefan Friedl for various discussions and for pointing out the presence of the trick in Kauffman's textbook. The second author also thanks his friend Audrey Thomas for first showing him a video of Conway's performance, which ended up kickstarting this project. Finally, we thank an anonymous referee for various corrections and comments, in particular motivating the \hyperref[sec:outlook]{outlook}.

We thank the Inkscape community for developing and maintaining the free and open-source vector graphics software used to prepare the figures in this work. The authors were partially supported by RTG 2229 ``Asymptotic Invariants and Limits of Groups and Spaces'' funded by Deutsche Forschungsgemeinschaft (DFG, German Research Foundation).



\section{The magic}\label{sec:magic}

\subsection{The trefoil trick}\label{sec:trefoil}

We begin by committing the illusionist's cardinal sin, explaining how to execute the trick that produces the left-handed trefoil from the unknot (we trust our readers to keep this secret). The instructions are given in Figure~\ref{fig:trick_explanation} and its caption;  we strongly encourage trying it out before reading on.

\begin{figure}[h!]
    \centering
    \def\svgwidth{\linewidth}
\begingroup%
  \makeatletter%
  \providecommand\color[2][]{%
    \errmessage{(Inkscape) Color is used for the text in Inkscape, but the package 'color.sty' is not loaded}%
    \renewcommand\color[2][]{}%
  }%
  \providecommand\transparent[1]{%
    \errmessage{(Inkscape) Transparency is used (non-zero) for the text in Inkscape, but the package 'transparent.sty' is not loaded}%
    \renewcommand\transparent[1]{}%
  }%
  \providecommand\rotatebox[2]{#2}%
  \newcommand*\fsize{\dimexpr\f@size pt\relax}%
  \newcommand*\lineheight[1]{\fontsize{\fsize}{#1\fsize}\selectfont}%
  \ifx\svgwidth\undefined%
    \setlength{\unitlength}{815.05967989bp}%
    \ifx\svgscale\undefined%
      \relax%
    \else%
      \setlength{\unitlength}{\unitlength * \real{\svgscale}}%
    \fi%
  \else%
    \setlength{\unitlength}{\svgwidth}%
  \fi%
  \global\let\svgwidth\undefined%
  \global\let\svgscale\undefined%
  \makeatother%
  \begin{picture}(1,0.97556969)%
    \lineheight{1}%
    \setlength\tabcolsep{0pt}%
    \put(0,0){\includegraphics[width=\unitlength,page=1]{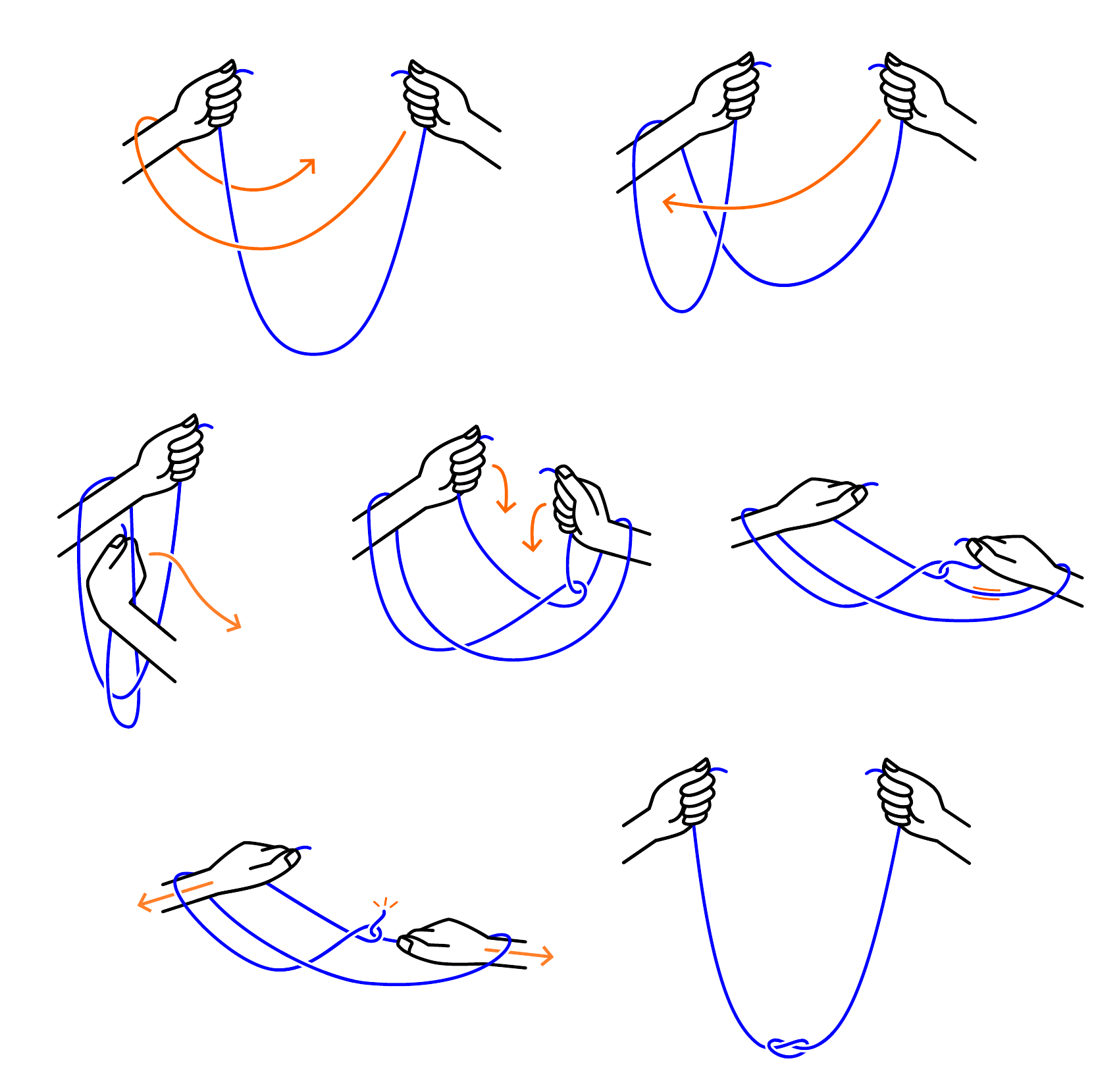}}%
    \put(0.07732902,0.9306059){\color[rgb]{0,0,0}\makebox(0,0)[t]{\lineheight{0.69999999}\smash{\begin{tabular}[t]{c}$(1)$\end{tabular}}}}%
    \put(0.0405219,0.610384){\color[rgb]{0,0,0}\makebox(0,0)[t]{\lineheight{0.69999999}\smash{\begin{tabular}[t]{c}$(3)$\end{tabular}}}}%
    \put(0.12885898,0.24967387){\color[rgb]{0,0,0}\makebox(0,0)[t]{\lineheight{0.69999999}\smash{\begin{tabular}[t]{c}$(6)$\end{tabular}}}}%
    \put(0.53373738,0.30488461){\color[rgb]{0,0,0}\makebox(0,0)[t]{\lineheight{0.69999999}\smash{\begin{tabular}[t]{c}$(7)$\end{tabular}}}}%
    \put(0.29817172,0.59198042){\color[rgb]{0,0,0}\makebox(0,0)[t]{\lineheight{0.69999999}\smash{\begin{tabular}[t]{c}$(4)$\end{tabular}}}}%
    \put(0.6441588,0.57357687){\color[rgb]{0,0,0}\makebox(0,0)[t]{\lineheight{0.69999999}\smash{\begin{tabular}[t]{c}$(5)$\end{tabular}}}}%
    \put(0.53741799,0.9306059){\color[rgb]{0,0,0}\makebox(0,0)[t]{\lineheight{0.69999999}\smash{\begin{tabular}[t]{c}$(2)$\end{tabular}}}}%
  \end{picture}%
\endgroup%

    \caption{The trefoil trick, from the performer's perspective. Starting with the unknotted string (1), the right hand moves around the left arm as illustrated, so there is now a piece of string dangling from each hand, and two from the left arm~(2). The right hand then passes through the loop formed by the string pieces dangling from the left hand and the front of the left arm, hooking around the left side of the piece of string hanging from the back of the left arm~(3), and pulling it back to produce a loop around the right wrist~(4). Then both hands curve downwards~(5), and in this motion, the right hand discretely releases the string, grabbing it at the indicated location~(6). The string is now allowed to slide off both arms, revealing the trefoil knot~(7).}
    \label{fig:trick_explanation}
\end{figure}

The transition from (5) to (6) of \Cref{fig:trick_explanation} is of course the topologically interesting step, where the knot formed by the string and the performer's body changes from an unknot to a trefoil. The situation is best understood by first observing that topologically, everything from~(3) to~(6) amounts to the right hand exchanging the segment of string it is holding, for the one dangling from the back of the left arm. Indeed, the motion of hooking around this piece of string and pulling it back serves only to conceal the exchange. In particular, it ensures that the right hand grabs the string with the thumb pointing in the correct direction, and makes it so that the excess string released is short enough to pass unnoticed. Were one to disregard these practical matters, the exchange could be executed simply as illustrated in \Cref{fig:pics_to_diagrams}, where a diagrammatic translation of the change of knots is also shown.

\begin{figure}[h]
    \centering
    \def\svgwidth{0.5\linewidth}
\begingroup%
  \makeatletter%
  \providecommand\color[2][]{%
    \errmessage{(Inkscape) Color is used for the text in Inkscape, but the package 'color.sty' is not loaded}%
    \renewcommand\color[2][]{}%
  }%
  \providecommand\transparent[1]{%
    \errmessage{(Inkscape) Transparency is used (non-zero) for the text in Inkscape, but the package 'transparent.sty' is not loaded}%
    \renewcommand\transparent[1]{}%
  }%
  \providecommand\rotatebox[2]{#2}%
  \newcommand*\fsize{\dimexpr\f@size pt\relax}%
  \newcommand*\lineheight[1]{\fontsize{\fsize}{#1\fsize}\selectfont}%
  \ifx\svgwidth\undefined%
    \setlength{\unitlength}{308.84138453bp}%
    \ifx\svgscale\undefined%
      \relax%
    \else%
      \setlength{\unitlength}{\unitlength * \real{\svgscale}}%
    \fi%
  \else%
    \setlength{\unitlength}{\svgwidth}%
  \fi%
  \global\let\svgwidth\undefined%
  \global\let\svgscale\undefined%
  \makeatother%
  \begin{picture}(1,1.12840996)%
    \lineheight{1}%
    \setlength\tabcolsep{0pt}%
    \put(0,0){\includegraphics[width=\unitlength,page=1]{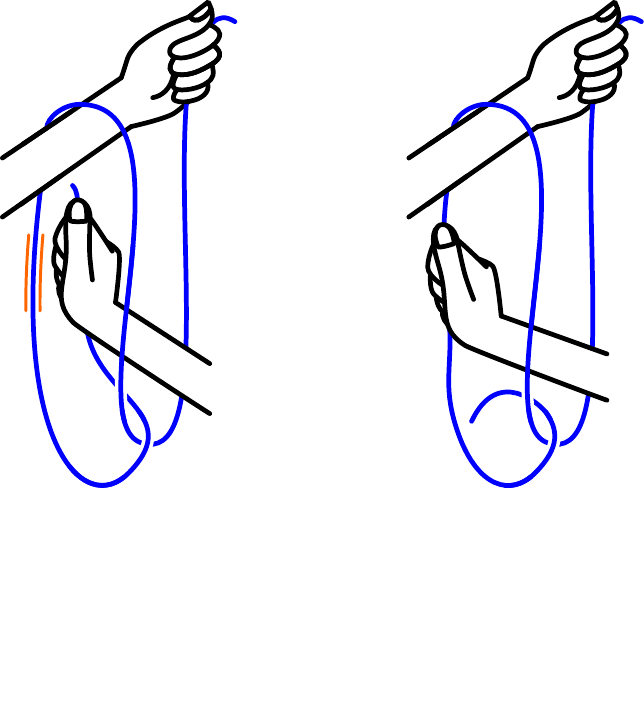}}%
    \put(0.48929959,0.7457038){\color[rgb]{0,0,0}\makebox(0,0)[t]{\lineheight{0.69999999}\smash{\begin{tabular}[t]{c}\huge $\rightsquigarrow$\end{tabular}}}}%
    \put(0,0){\includegraphics[width=\unitlength,page=2]{pics_to_diagrams.pdf}}%
  \end{picture}%
\endgroup%

    \caption{A simplified version of the change of knots. The illustration on the top left is  a rearrangement of step~(3) of Figure~\ref{fig:trick_explanation}. On the bottom, these illustrations are translated into knot diagrams -- on the left is a diagram of the unknot, and on the right, ignoring the dotted arc, we see a diagram for the trefoil. The latter is in fact an example of a knot\-holder diagram, which we will define in \Cref{sec:knotholder_diagrams}.}
    \label{fig:pics_to_diagrams}
\end{figure}

\newpage 
\phantom{.}
\newpage 

\subsection{Knotholder tricks} \label{sec:knotholder_tricks}

Let us now generalize the trefoil trick to a class of tricks that produce other knots. We shall refer to them as \textbf{knotholder tricks}, and they consist of three basic steps, illustrated in Figure~\ref{fig:instructions_5_2} (left):

    \begin{enumerate}
        \item \label{step:U_shapes}
        Starting from the unknotted string, plainly held before the audience, the right hand winds back and forth around the left arm, from right to left, to produce a sequence of dangling $\cup$-shaped pieces of string. The right-most vertical strand is being held by the left hand, and the other vertical strands might be dangling from the front or the back of the left arm.
        
        \item \label{setep:weave}
        The right hand weaves through the dangling $\cup$-shapes in an upward motion. Its trajectory should end immediately to the right of the left-most dangling strand (which is not part of a $\cup$-shape), but this strand should not be interacted with throughout the weaving motion.
        
        \item \label{step:exchange} A sleight of hand is performed, where the right hand exchanges the piece of string it is holding, with the left-most string dangling from the left arm. The details for executing this sleight of hand are explained below. 
    \end{enumerate}

 \newpage 

\begin{figure}[h]
    \centering
    \def\svgwidth{0.6\linewidth}
    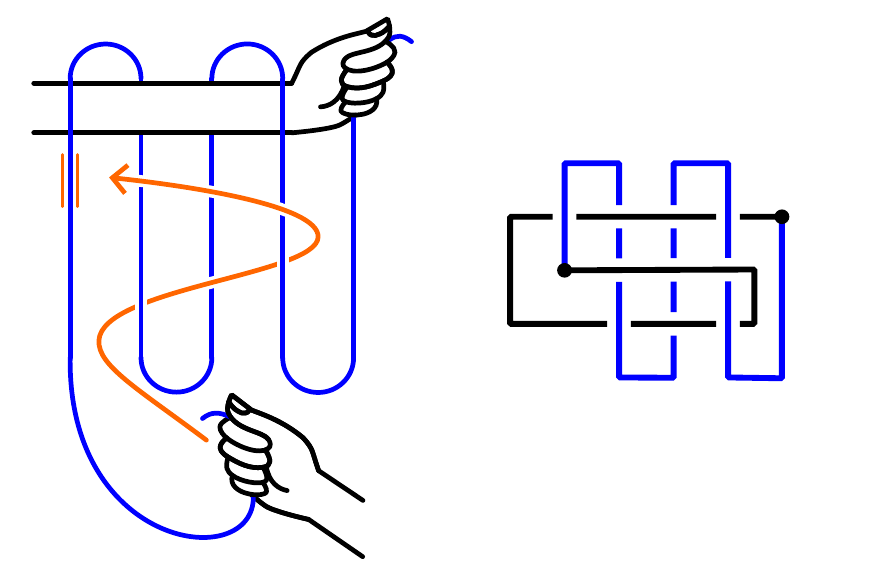
    \caption{A knotholder trick for the knot~$5_2$. Left: The string as produced at the end of Step~\ref{step:U_shapes} (with two $\cup$-shapes), and the weaving motion of the right hand in Step~\ref{setep:weave}. Step~\ref{step:exchange} has no additional input, but for the sake of clarity we indicate the region grabbed by the right hand when performing the slight of hand. Right: The corresponding knotholder diagram, where we also suggestively indicate which parts of the knot correspond to the string and to the performer's body.}
    \label{fig:instructions_5_2}
\end{figure}

    \Cref{fig:instructions_5_2} (right) exemplifies how the data for executing these steps may be succinctly specified by what we call a ``knotholder diagram'' of the target knot, whose formal definition we withhold until \Cref{sec:knotholder_diagrams}.

    We now take a moment to clarify the execution of Step~\ref{step:exchange}.  The right wrist hooks around the left of the left-most dangling piece of string, and pulls it back so it loops around the right wrist.
    Ideally, this escaping motion is performed without bringing along additional bits of string, so that at the end, only one loop of string is caught around the right wrist, as in \Cref{fig:escape} (top).

\begin{figure}[h!]
    \centering
    \def\svgwidth{0.7\linewidth}
\begingroup%
  \makeatletter%
  \providecommand\color[2][]{%
    \errmessage{(Inkscape) Color is used for the text in Inkscape, but the package 'color.sty' is not loaded}%
    \renewcommand\color[2][]{}%
  }%
  \providecommand\transparent[1]{%
    \errmessage{(Inkscape) Transparency is used (non-zero) for the text in Inkscape, but the package 'transparent.sty' is not loaded}%
    \renewcommand\transparent[1]{}%
  }%
  \providecommand\rotatebox[2]{#2}%
  \newcommand*\fsize{\dimexpr\f@size pt\relax}%
  \newcommand*\lineheight[1]{\fontsize{\fsize}{#1\fsize}\selectfont}%
  \ifx\svgwidth\undefined%
    \setlength{\unitlength}{535.16285814bp}%
    \ifx\svgscale\undefined%
      \relax%
    \else%
      \setlength{\unitlength}{\unitlength * \real{\svgscale}}%
    \fi%
  \else%
    \setlength{\unitlength}{\svgwidth}%
  \fi%
  \global\let\svgwidth\undefined%
  \global\let\svgscale\undefined%
  \makeatother%
  \begin{picture}(1,0.76357503)%
    \lineheight{1}%
    \setlength\tabcolsep{0pt}%
    \put(0,0){\includegraphics[width=\unitlength,page=1]{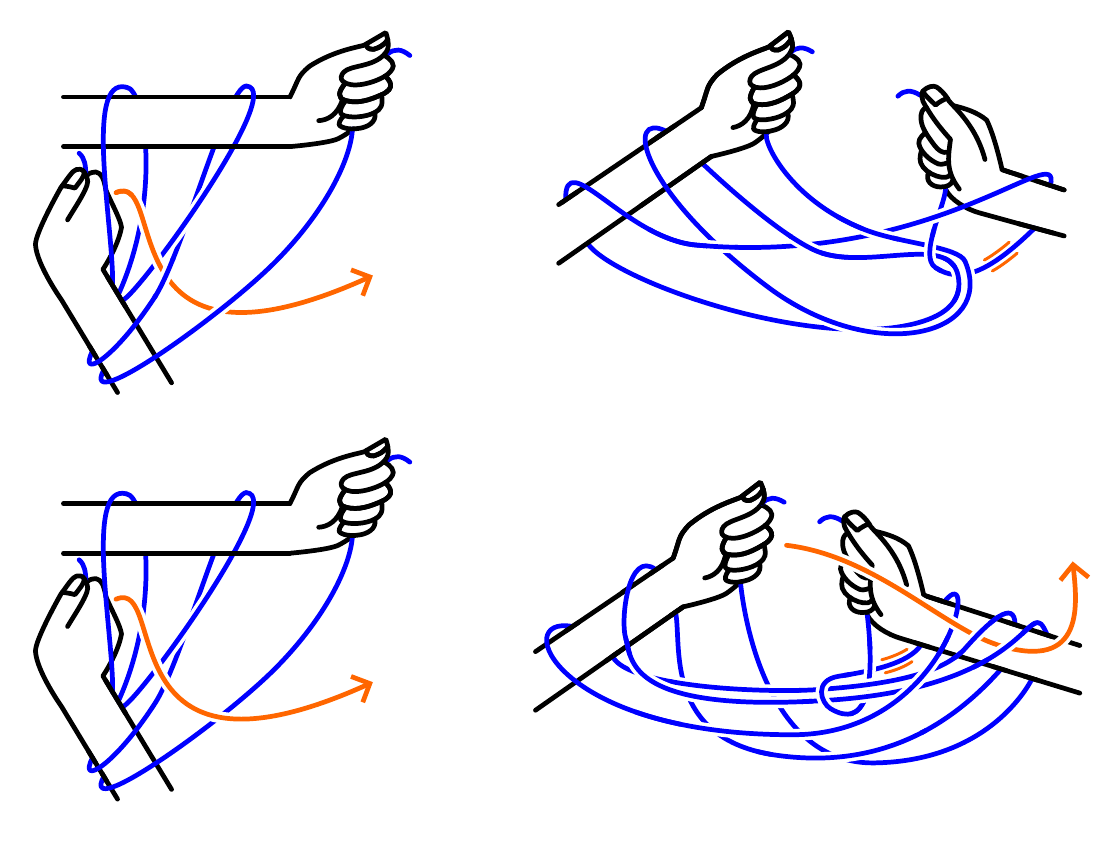}}%
    \put(0.41374875,0.56432298){\color[rgb]{0,0,0}\makebox(0,0)[t]{\lineheight{0.69999999}\smash{\begin{tabular}[t]{c}\huge $\rightsquigarrow$\end{tabular}}}}%
    \put(0.41374875,0.19994764){\color[rgb]{0,0,0}\makebox(0,0)[t]{\lineheight{0.69999999}\smash{\begin{tabular}[t]{c}\huge $\rightsquigarrow$\end{tabular}}}}%
  \end{picture}%
\endgroup%

    \caption{The sleight of hand in Step~\ref{step:exchange}, following up on the example from \Cref{fig:instructions_5_2}. Top: Ideally, after the right wrist hooks around the left-most strand dangling from the left arm, only this strand is dragged along, and the right hand can then execute the exchange. Bottom: If undesired bits of string are caught in the escaping motion, the extraneous loops around the right arm may be scooped away by the left arm.}
    \label{fig:escape}
\end{figure}
    
It might however be unavoidable that some additional undesired pieces of string loop around the right wrist, and these other loops might interfere with the performer's ability to swap the strings as intended. In that case, the left arm may carefully scoop out the undesired bits of string from around the right arm before proceeding. This maneuver is illustrated in \Cref{fig:escape} (bottom). Note that if the hooking motion of the right arm was performed with some care, the loop closest to the hand is the one to be kept.
    
    Having only one loop of string around it, the right wrist bends downward (together with the left wrist, diverting the audience's attention), and the right hand performs the exchange as shown in \Cref{fig:trick_explanation} (5) -- (6). Letting all the string slide out of both arms, the target knot is revealed to the audience.

\begin{rem}[Loose ends]
    After executing the sleight of hand, the right hand will be holding a longer piece of loose string than in the beginning, which the audience might notice by comparison with the one on the left hand. To help mitigate this difference, the performer may begin the trick with a slightly longer tip dangling from the left hand than from the right. Even if there is a noticeable difference at this point, the audience's suspicion has not yet been aroused.
\end{rem}

\begin{rem}[Band surgery]\label{rem:band_surgery}
    A \textbf{band} for a link $L\subset \mathbb S^3$ is an embedding of a square $b\colon [-1,1]^2 \to \mathbb S^3$ that meets~$L$ precisely at two opposite sides, namely, $b^{-1}(L) = \{-1,1\} \times [-1,1]$.
    The \textbf{band surgery} of~$L$ along~$b$ is the link obtained from~$L$ by replacing $b(\{-1,1\} \times [-1,1])$ with $b([-1,1] \times \{-1,1\})$.
    
    \Cref{fig:band_surgery} illustrates that the sleight of hand in the knotholder trick may be interpreted as transforming the starting unknot by a band surgery, to produce a $2$-component link of the target knot and an unknot. The fact that this second component is indeed an unknot follows from  the right hand moving upwards during Step~\ref{setep:weave}.
\end{rem}

\begin{figure}[h!]
    \centering
    \def\svgwidth{0.54\linewidth}
\begingroup%
  \makeatletter%
  \providecommand\color[2][]{%
    \errmessage{(Inkscape) Color is used for the text in Inkscape, but the package 'color.sty' is not loaded}%
    \renewcommand\color[2][]{}%
  }%
  \providecommand\transparent[1]{%
    \errmessage{(Inkscape) Transparency is used (non-zero) for the text in Inkscape, but the package 'transparent.sty' is not loaded}%
    \renewcommand\transparent[1]{}%
  }%
  \providecommand\rotatebox[2]{#2}%
  \newcommand*\fsize{\dimexpr\f@size pt\relax}%
  \newcommand*\lineheight[1]{\fontsize{\fsize}{#1\fsize}\selectfont}%
  \ifx\svgwidth\undefined%
    \setlength{\unitlength}{411.89768393bp}%
    \ifx\svgscale\undefined%
      \relax%
    \else%
      \setlength{\unitlength}{\unitlength * \real{\svgscale}}%
    \fi%
  \else%
    \setlength{\unitlength}{\svgwidth}%
  \fi%
  \global\let\svgwidth\undefined%
  \global\let\svgscale\undefined%
  \makeatother%
  \begin{picture}(1,0.41535256)%
    \lineheight{1}%
    \setlength\tabcolsep{0pt}%
    \put(0,0){\includegraphics[width=\unitlength,page=1]{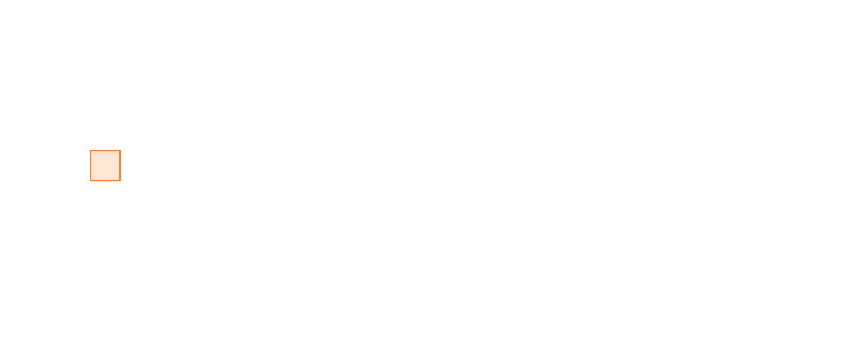}}%
    \put(0.5049648,0.19892531){\color[rgb]{0,0,0}\makebox(0,0)[t]{\lineheight{0.69999999}\smash{\begin{tabular}[t]{c}\huge $\rightsquigarrow$\end{tabular}}}}%
    \put(0,0){\includegraphics[width=\unitlength,page=2]{band_surgery.pdf}}%
  \end{picture}%
\endgroup%

    \caption{The knots before and after executing the sleight of hand for the knot\-holder trick in \Cref{fig:instructions_5_2}. The highlighted band guides a surgery from the starting unknot, to a link of the knot~$5_2$ and an un\-knot (dotted).}
    \label{fig:band_surgery}
\end{figure}

\begin{rem}[Moving up]
    Our insistence that the right hand move upwards during Step~\ref{setep:weave} is important for a few reasons. This requirement makes it so that the performer's right hand only needs to interact with the bits of string dangling from the left arm, which are easily accessible, as opposed to the portion of string below, which might get messily tangled up. Moreover, this prevents the left-over piece of string after Step~\ref{step:exchange} from getting tangled up with itself and the performer's arm.    
\end{rem}

\section{The diagrams}\label{sec:diagrams}

As already mentioned, a knot\-holder diagram for a knot~$K$ is meant to specify instructions for executing a knot\-holder trick that produces~$K$. This amounts to encoding the data of how many $\cup$-shapes should be produced in Step~\ref{step:U_shapes}, whether their vertical strands hang from the front or the back of the left arm, and how the right hand should weave amid these strands in Step~\ref{setep:weave}. This last piece of data will be displayed within the knot\-holder diagram by means of what will call a ``weaver diagram'', which represents a ``weaver tangle'' within the knot. Let us now introduce these concepts.

\subsection{Weaver tangles and weaver diagrams}\label{sec:weaver}
    Given $n\in \NN$, let $D\subset \RR^2$ be the disk that has the line segment $[0,n+1] \times \{0\}$ as a diameter. We also define the points
    \[x_1\coloneq (1,0), \ x_2\coloneq (2,0), \  \ldots, \ x_n\coloneq (n,0).\]
    A \textbf{weaver tangle} of width~$n$ consists of $n+1$~disjoint properly embedded arcs~$L_1, \ldots , L_n, \gamma$ in the cylinder~$C\coloneq D \times [0,1]$, where:
    \begin{itemize}
        \item Each~$L_i$ is the straight line segment $\{x_i\} \times [0,1]$. The union $L:=\bigcup_{i=1}^n L_i$ is called the \textbf{warp} of the weaver tangle.
        
        \item The strand~$\gamma$, which we call the \textbf{weft}\footnote{To distinguish the warp from the weft, it might be useful to remember the mnemonic that ``the weft goes weft and right, the warp goes warp and down''.}, has endpoints $(0,0,\tfrac 13)$ and $(0,0,\tfrac23)$, and admits a parameterization $[0,1] \to C = D \times [0,1]$ whose $[0,1]$-coordinate is nondecreasing.
    \end{itemize} 

    We are generally only interested in weaver tangles up to isotopy of the weft~$\gamma$ within $C \setminus L$, relative endpoints, and will say that two weaver tangles are \textbf{isotopic} if their wefts are isotopic in this sense.
    Every weaver tangle~$W$ may be isotoped so that for the projection $\pi \colon C \to [0,n+1] \times [0,1]$ to the 1st and 3rd coordinates, $\pi(\gamma)$~is an embedded arc transverse to~$\pi(L)$. Such a projection, together with over/undercrossing data at every point in~$\pi (\gamma) \cap \pi (L)$, is called a \textbf{weaver diagram} of width~$n$ for~$W$; see \Cref{fig:weaver} for an example. Every weaver tangle is determined up to isotopy by any of its diagrams.

\begin{figure}[h!]
    \centering
    \def\svgwidth{0.7\linewidth}
\begingroup%
  \makeatletter%
  \providecommand\color[2][]{%
    \errmessage{(Inkscape) Color is used for the text in Inkscape, but the package 'color.sty' is not loaded}%
    \renewcommand\color[2][]{}%
  }%
  \providecommand\transparent[1]{%
    \errmessage{(Inkscape) Transparency is used (non-zero) for the text in Inkscape, but the package 'transparent.sty' is not loaded}%
    \renewcommand\transparent[1]{}%
  }%
  \providecommand\rotatebox[2]{#2}%
  \newcommand*\fsize{\dimexpr\f@size pt\relax}%
  \newcommand*\lineheight[1]{\fontsize{\fsize}{#1\fsize}\selectfont}%
  \ifx\svgwidth\undefined%
    \setlength{\unitlength}{288.05812806bp}%
    \ifx\svgscale\undefined%
      \relax%
    \else%
      \setlength{\unitlength}{\unitlength * \real{\svgscale}}%
    \fi%
  \else%
    \setlength{\unitlength}{\svgwidth}%
  \fi%
  \global\let\svgwidth\undefined%
  \global\let\svgscale\undefined%
  \makeatother%
  \begin{picture}(1,0.48739482)%
    \lineheight{1}%
    \setlength\tabcolsep{0pt}%
    \put(0.1590534,0.00447501){\color[rgb]{0,0,1}\makebox(0,0)[t]{\lineheight{0.69999999}\smash{\begin{tabular}[t]{c}$L_1$\end{tabular}}}}%
    \put(0.26853894,0.00447501){\color[rgb]{0,0,1}\makebox(0,0)[t]{\lineheight{0.69999999}\smash{\begin{tabular}[t]{c}$L_2$\end{tabular}}}}%
    \put(0.37802509,0.00447501){\color[rgb]{0,0,1}\makebox(0,0)[t]{\lineheight{0.69999999}\smash{\begin{tabular}[t]{c}$L_3$\end{tabular}}}}%
    \put(0.0986534,0.29703448){\color[rgb]{0,0,0}\makebox(0,0)[t]{\lineheight{0.69999999}\smash{\begin{tabular}[t]{c}$\gamma$\end{tabular}}}}%
    \put(0,0){\includegraphics[width=\unitlength,page=1]{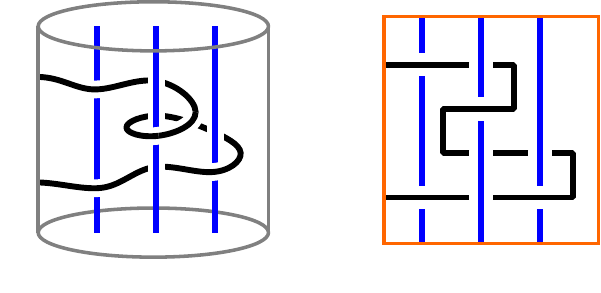}}%
  \end{picture}%
\endgroup%

    \caption{A weaver tangle of width 3 (left) and one of its weaver diagrams (right).}
    \label{fig:weaver}
\end{figure}

    Now let $f\colon D \to D$ be a diffeomorphism that fixes the boundary~$\partial D$ pointwise and $\{x_1, \ldots ,x_n\}$ as a set. In other words, $f$~represents an element~$[f]$ of the mapping class group $\Mcg(D, \{x_1, \ldots, x_n\})$. Given a weaver tangle $W=\gamma \cup L$ of width~$n$, we define a new weaver tangle $f\cdot W$ as the image of~$W\subset C = D \times [0,1]$ under the map $f\times \mathrm {id}$. 

    If a different map~$f'$ in the same mapping class is used, then a witnessing isotopy from~$f$ to~$f'$ shows that $f\cdot W$ is isotopic to $f'\cdot W$ through weaver tangles. Thus, since we are only interested in weaver tangles up to isotopy, we will often simply specify $[f]\cdot W$. Moreover, the well-known isomorphism $\Mcg(D, \{x_1, \ldots, x_n\}) \cong \mathcal B_n$, where $\mathcal B_n$ is the braid group on $n$~strands \cite[Section I.9.1.3]{FaMa11} allows us to conveniently describe~$[f]$ by means of a braid. An example of a braid (read from top to bottom) acting on a weaver tangle is illustrated in \Cref{fig:weaver_action}. 

\begin{figure}[h!]
    \centering
    \def\svgwidth{0.75\linewidth}
\begingroup%
  \makeatletter%
  \providecommand\color[2][]{%
    \errmessage{(Inkscape) Color is used for the text in Inkscape, but the package 'color.sty' is not loaded}%
    \renewcommand\color[2][]{}%
  }%
  \providecommand\transparent[1]{%
    \errmessage{(Inkscape) Transparency is used (non-zero) for the text in Inkscape, but the package 'transparent.sty' is not loaded}%
    \renewcommand\transparent[1]{}%
  }%
  \providecommand\rotatebox[2]{#2}%
  \newcommand*\fsize{\dimexpr\f@size pt\relax}%
  \newcommand*\lineheight[1]{\fontsize{\fsize}{#1\fsize}\selectfont}%
  \ifx\svgwidth\undefined%
    \setlength{\unitlength}{370.09743037bp}%
    \ifx\svgscale\undefined%
      \relax%
    \else%
      \setlength{\unitlength}{\unitlength * \real{\svgscale}}%
    \fi%
  \else%
    \setlength{\unitlength}{\svgwidth}%
  \fi%
  \global\let\svgwidth\undefined%
  \global\let\svgscale\undefined%
  \makeatother%
  \begin{picture}(1,0.41974974)%
    \lineheight{1}%
    \setlength\tabcolsep{0pt}%
    \put(0,0){\includegraphics[width=\unitlength,page=1]{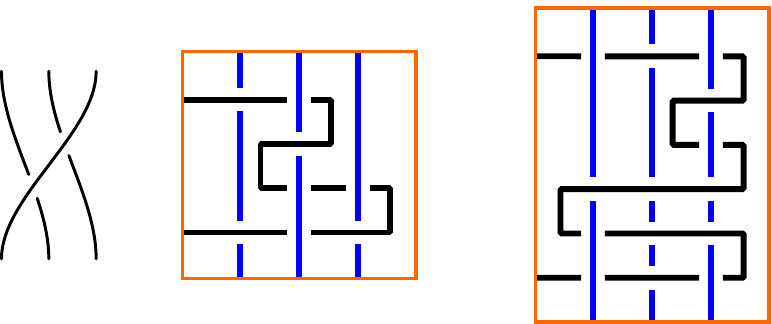}}%
    \put(0.6172285,0.19511617){\color[rgb]{0,0,0}\makebox(0,0)[t]{\lineheight{0.69999999}\smash{\begin{tabular}[t]{c}$=$\end{tabular}}}}%
    \put(0.17116076,0.19505306){\color[rgb]{0,0,0}\makebox(0,0)[t]{\lineheight{0.69999999}\smash{\begin{tabular}[t]{c}$\cdot$\end{tabular}}}}%
    \put(0,0){\includegraphics[width=\unitlength,page=2]{weaver_action.pdf}}%
  \end{picture}%
\endgroup%

    \caption{This braid acts on the weaver tangle of~\Cref{fig:weaver} by moving the third vertical segment of the warp across the front of the tangle, all the way to the first position, while the other two segments are shifted one position to the right. The weft is dragged along this motion.}
    \label{fig:weaver_action}
\end{figure}

    \subsection{Knotholder diagrams}\label{sec:knotholder_diagrams}

    Given $m\in \NN$, a \textbf{knotholder diagram} with $m$~$\cup$-shapes is a knot diagram as in \Cref{fig:knotholder_def} 
    where:
    \begin{itemize}
        \item the $W$-labeled rectangle consists of a weaver diagram of width~$2m$, and
        \item the $2m$~crossings along the long horizontal segment, whose over- and understrands are unspecified in the picture, satisfy the following requirement: for every $i\in \{1,\ldots m\}$, the crossings numbered $2i-1$ and~$2i$ have the same sign. In other words, the vertical strand in the crossing~$2i-1$ crosses over the horizontal strand if and only if the vertical strand in position~$2i$ crosses under.
    \end{itemize} 

\begin{figure}[h!]
    \centering
    \def\svgwidth{0.5\linewidth}
    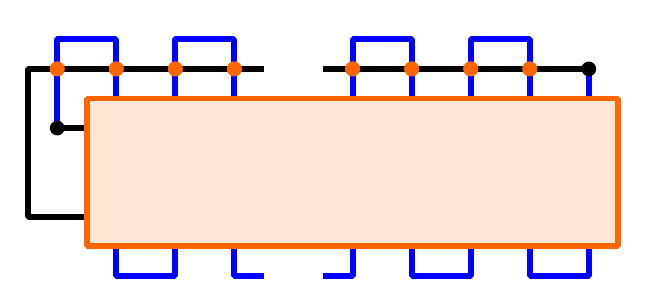
    \caption{The scheme for constructing a knothoder diagram with $m$~$\cup$-shapes. The missing data is a weaver diagram~$W$ of width~$2m$, and the signs of the consecutive pairs of crossings that are connected by $\cap$-shapes above the long horizontal segment.}
    \label{fig:knotholder_def}
\end{figure}
    
    A diagram of this form specifies a knotholder trick as illustrated in \Cref{fig:instructions_5_2}. The second condition implements the physical constraint that, at the end of Step~\ref{step:U_shapes} of the trick, the string forming the $\cup$-shapes must rest atop the performer's left arm. We have opted to number these crossings from right to left because this is the order in which they are formed during Step~\ref{step:U_shapes}.

    We introduce one last piece of notation. For each $i\in \{1, \ldots m\}$, let $\varepsilon_i\in \{+,-\}$ be the common sign of the crossings~$2i-1$ and~$2i$. These are assembled into the \textbf{sign tuple} $(\varepsilon_1, \ldots, \varepsilon_m)$ of the knotholder diagram. Physically, the fact that $\varepsilon_i=+$ (respectively $\varepsilon_i=-$) means that when the right hand moves over the left arm for the $i$-th time in Step~\ref{step:U_shapes}, this motion is towards (respectively away from) the performer's body. 
    \Cref{fig:conn_sum} shows examples with $\varepsilon = (-,-)$ (left) and $\varepsilon = (+,-)$ (right), and  in \Cref{fig:instructions_5_2} we have $\varepsilon=(-,+)$.

\begin{rem}[Mirror images]
    Switching the signs of all crossings in a knot\-holder diagram for a knot~$K$ yields a knot\-holder diagram for its mirror image~$\overline K$. This way one may easily modify a knot\-holder trick for a knot~$K$ to one for~$\overline K$ without interchanging the roles played by the two hands.
\end{rem}

\section{The proof}\label{sec:main_proof}

Equipped with the definition of knotholder diagrams, we begin tackling the proof of \Cref{thm:main_intro}. A key intermediate step involves representing knots by ``potholder diagrams'', which we now discuss.

\subsection{Potholder diagrams}

Let $n_1$ and $n_2$ be odd positive integers. A \textbf{potholder diagram} of \textbf{width}~$n_1$ and \textbf{height}~$n_2$ is a knot diagram produced from a grid of $n_1$~vertical and $n_2$~horizontal line segments by connecting each of the $2(n_1 + n_2)$ loose ends to a neighboring one and assigning over/undercrossings at each of the $n_1 n_2$ intersections. The definition is best understood by staring at a few examples, see Figure~\ref{fig:potholder}.

\begin{figure}[h!]
    \centering
    \def\svgwidth{0.6\linewidth}
    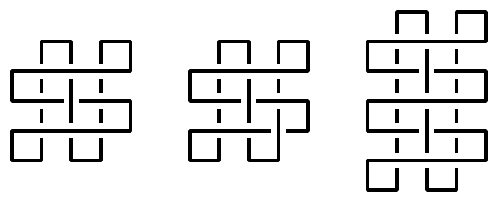
    \caption{Potholder diagrams for the right-handed trefoil, the knot~$5_2$, and the torus knot~$5_1$ (from left to right). All these potholder diagrams have width~$3$, the first two have height~$3$, and the third one has height~$5$.}
    \label{fig:potholder}
\end{figure}

Potholder diagrams were defined by Even-Zohar, Hass, Linial and Nowik \cite{EHL19} with the additional requirement that the diagram is ``square'', that is, the width and height should be the same. One can always enlarge a potholder diagram of a given knot in either direction by adding superfluous crossings, so in particular, it is possible to enlarge every potholder diagram so it becomes square. In that article, the following theorem is established \cite[Theorem~1.1]{EHL19}.

\begin{thm}[Universality of potholder diagrams]\label{thm:universality_potholder}
    Every knot has a potholder diagram.
\end{thm}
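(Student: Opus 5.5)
The plan is to start from a presentation of $K$ in which the strands are already organized along a grid, and then to pad it until it has the rigid shape of a potholder diagram. The superfluous crossings introduced by the padding will be chosen so that they can be undone by Reidemeister moves.

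\emph{Starting presentation.} I would begin with a grid diagram (arc presentation) of~$K$, which exists for every knot by the classical result of Cromwell. It consists of $k$ vertical and $k$ horizontal segments, every vertical segment crosses over every horizontal one, and each row and each column contains exactly one segment. I would think of it three-dimensionally: all horizontal segments lie in the plane $z=0$, all vertical segments lie in the plane $z=1$, and the corners are joined by short vertical arcs in the $z$-direction.

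\emph{Extending to full lines.} Next I would stretch each segment across the whole grid. A horizontal segment joining columns $a$ and~$b$ is replaced by an arc that runs along its row across all columns. It does this by running out to one side of the grid, turning around through a boundary cap, and coming back on a parallel adjacent row. The vertical segments are treated in the same way.

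The over/under data at all new crossings are assigned by height, in the spirit of the grid diagram: the crossings are chosen so that each full line, together with its returning partner, lies in a thin horizontal slab at its own height. Parallel strands stacked at distinct heights that cross only transversally can then be slid back onto their original segments by an isotopy. This isotopy passes through no other strand, because every strand met along the way lies entirely above or entirely below it. The upshot is that the padded diagram still represents~$K$.

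\emph{Matching the potholder boundary pattern.} This step is where I expect the real difficulty. In a potholder diagram the loose ends are joined only to \emph{neighboring} ends, so every turnaround must be a small cap between adjacent lines at the boundary. The resulting shadow must also be a single closed curve, and $n_1$ and~$n_2$ must be odd.

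I would first insert the return rows and columns immediately adjacent to their partners, so that the caps connect neighbors. I would then fix parity and connectivity by adding further trivial pairs of parallel lines. These would be placed at the extreme top or bottom height, which amounts to Reidemeister~I/II insertions, and they can be used to re-route cap pairings without changing the knot type.

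The step I would try first is an explicit normal form. I would check that the boundary pairings of the top/bottom and left/right caps, viewed as a meander system, can always be completed to a single component by adding one extra pair of lines. After that, padding by one further line in each direction makes $n_1$ and~$n_2$ odd.
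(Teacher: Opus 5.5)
There is a genuine gap. It sits in the step you flag as difficult, but the problem is structural rather than a matter of boundary parity or connectivity.

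\textbf{The potholder shadow is rigid.} Its $2(n_1+n_2)$ loose ends sit in cyclic order around the rectangle. A cycle of even length has only two matchings into neighbouring pairs, and here they differ by a reflection. For odd $n_1,n_2$, either matching gives a single curve: one horizontal zigzag through all rows and one vertical zigzag through all columns, glued at two opposite corners. So a potholder diagram is a meander diagram (two arcs without self-crossings). The curve switches between horizontal and vertical lines only at those two corners, and the only freedom is the crossing data. The theorem therefore says that every knot arises from this one fixed shadow by choosing crossings.

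\textbf{Your construction never produces this shape.} A grid diagram has $2k$ interior corners where a horizontal segment turns into a vertical one, and your extension step keeps them. The arc replacing the horizontal segment in row $r$ must still end at $(a,r)$ and $(b,r)$, where the vertical segments of columns $a$ and $b$ are attached. So the curve still turns inside the grid, and row $r$ is not a line running from boundary to boundary.

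\textbf{Your last step cannot repair this.} Adding parallel pairs of lines and padding for parity act only near the boundary. ``Re-routing cap pairings'' is not available at all: the boundary matching is forced. Changing which ends a cap joins is a band move, which in general changes the knot type or the number of components; it is not a Reidemeister move. Hence the claim that a normal form can always be completed to a single component by one extra pair of lines is not a routine check. It is where the entire content of the theorem would have to live.

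\textbf{What is missing is the meander reduction} that the paper recalls from Even-Zohar--Hass--Linial--Nowik. It has three parts.
\begin{enumerate}
\item Cut an arbitrary diagram into two arcs $\alpha,\beta$. Remove the self-crossings of $\alpha$ one at a time: drag the strand met at the first self-crossing back along $\alpha$ and past the change point, so that it crosses $\beta$ instead. Then do the same for $\beta$.
\item Make the two change points adjacent to a common region. Do this by rerouting both arcs near one change point along new arcs that lie over everything else. This is where a layering argument like your slab argument is legitimately used.
\item Lay $\alpha$ out as the horizontal zigzag, with one crossing on each odd row and the crossings ordered along $\beta$. Push the sub-arcs of $\beta$ to the right of $\alpha$ in front and those to the left behind, so they become cups and caps. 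Stretch these into the vertical lines.
\end{enumerate}
Your height-padding idea is a fine tool, but it must be applied to a meander diagram, not to a grid diagram whose strands alternate $2k$ times between horizontal and vertical.
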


The proof proceeds by a three-step construction, which (1) modifies a given knot diagram to a ``meander diagram'' of the same knot~$K$, then (2) upgrades it to a ``standard meander diagram'', from which it finally (3) constructs a potholder diagram. For the sake of completeness, we will now give the definition of these types of diagrams and explain the three steps of the construction.

\vspace{5pt}
\underline{Step 1: Meander diagrams.}
A \textbf{meander diagram} is a knot diagram whose underlying immersed curve is the union of two embedded arcs with the same pair of endpoints, which are called \textbf{change points}. In other words, the diagram decomposes into two arcs that have no self-crossings. Potholder diagrams are examples of meander diagrams, with the top-right and bottom-left corners as change points. Knotholder diagrams too are meander diagrams, as the usual coloring in our figures shows.

To produce a meander diagram from an arbitrary diagram of~$K$, we start by decomposing its underlying curve into two immersed arcs $\alpha$ and~$\beta$ (with endpoints away from the crossings), possibly with (forbidden) self-crossings. If $\alpha$~has self-crossings, consider the first such crossing that is encountered as one walks along~$\alpha$ starting from one of the change points~$p$. To remove it, we drag the encountered $\alpha$-strand back along the initial segment of~$\alpha$, past~$p$, so it now intersects~$\beta$ instead; see \Cref{fig:diagram_to_meander} for an example. If the initial segment of~$\alpha$ had crossings with~$\beta$, this move also produces, for each such crossing, two new crossings between $\alpha$ and~$\beta$, but no forbidden crossings are created.

\begin{figure}[h]
    \centering
    \def\svgwidth{0.9\linewidth}
    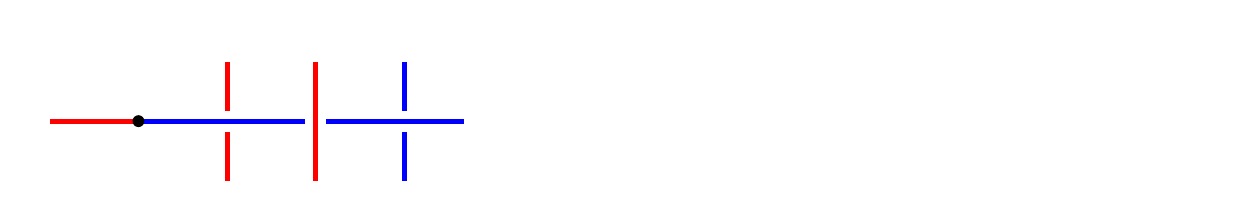
    \caption{Removing a self-intersection of~$\alpha$ in Step~1. }
    \label{fig:diagram_to_meander}
\end{figure}

Repeating this move until all self-crossings of~$\alpha$ have been eliminated, and then proceeding similarly with the self-crossings of~$\beta$, yields a meander diagram for~$K$.

\vspace{5pt}
\underline{Step 2: Standard meander diagrams.}
A meander diagram  is \textbf{standard} if there is an embedded arc~$\gamma \in \mathbb S^2$ connecting its change points, whose interior is disjoint from the diagram. Again potholder diagrams are examples, but knotholder diagrams are generally not.

Given a meander diagram~$D$ for~$K$ with a decomposition into arcs~$\alpha$, $\beta$ and change points~$p$, $q$, let $q_\alpha$~be an interior point of~$\alpha$ lying closer to~$q$ than any crossing of~$D$. The sub-arc of~$\alpha$ from~$q_\alpha$ to~$p$ will be denoted by~$\alpha_-$. Similarly, let $q_\beta \in \beta$ be an interior point close to~$q$ and define the sub-arc~$\beta_-$ from~$q_\beta$ to~$p$. Moreover, let~$\gamma \subset \mathbb S^2$ be any embedded arc having~$p$ as an endpoint, and being otherwise disjoint from~$D$. We denote its second endpoint by~$q'$. Since embedded arcs do not separate~$\mathbb S^2$, there is an embedded arc~$\alpha'$ from~$q_\alpha$ to~$q'$ whose interior is disjoint from~$\alpha_-\cup \gamma$. Similarly, there is an embedded arc~$\beta'$ from~$q_\beta$ to~$q'$ with interior disjoint from~$\beta_- \cup \gamma$.

We produce a new meander diagram~$D'$ by taking the union of the embedded arcs $\alpha_- \cup \alpha'$ and $\beta_- \cup \beta'$,
and declaring that strands from~$\alpha'$ always cross over~$\beta_-$, strands from~$\beta'$ always cross over $\alpha_-$ and~$\alpha'$, and crossings involving $\alpha_-$ and~$\beta_-$ are as in~$D$; see \Cref{fig:standard_meander} for an example. 
This choice ensures that~$D'$ still represents~$K$, because the new portion $\alpha' \cup \beta'$ describes a curve that lies unknotted above~$K$. Moreover, $D'$~has change points $p$~and~$q'$, so $\gamma$~witnesses that $D'$~is a standard meander diagram.

\begin{figure}[h]
    \centering
    \def\svgwidth{0.9\linewidth}
    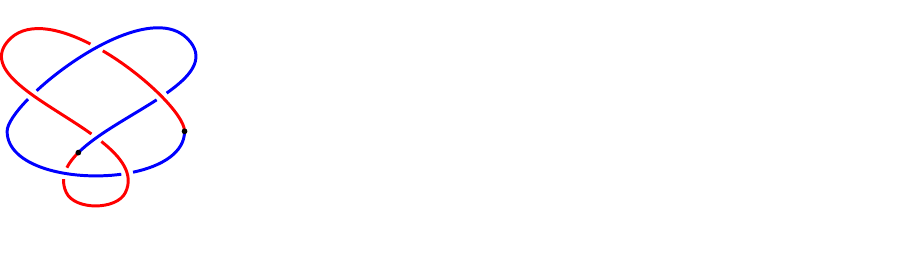
    \caption{Producing a standard meander diagram~$D'$ from a meander diagram~$D$ of the knot~$6_2$ in Step~2.}
    \label{fig:standard_meander}
\end{figure}

\vspace{5pt}
\underline{Step 3: Potholder diagrams.} To convert a standard meander diagram into a potholder diagram, consider the embedded circle $\alpha \cup \gamma \subset \mathbb S^2 = \RR^2 \cup \{\infty\}$, where~$\gamma$ is as in the definition of a standard meander diagram. We begin by performing an isotopy of~$\mathbb S^2$ that moves~$\alpha \cup \gamma$ to $(\{0 \} \times \RR) \cup \{\infty\}$, with the arc~$\alpha$ becoming a vertical interval, and $\gamma$~an unbounded region of the $y$
-axis. Interchanging the roles of $\alpha$~and~$\beta$ if necessary, we may assume that $\beta$~emanates from the lower change point towards the right of the $y$-axis, and with a type-1 Reidemeister move near the upper change point we may also arrange it so that $\beta$~emanates from it towards the left. Note that this forces the number~$n$ of crossings to be odd.
Since $\beta$~does not intersect the interior of~$\gamma$, it decomposes as a sequence of $n+1$~arcs with endpoints in~$\alpha$, that alternate between lying to its right and its left. See \Cref{fig:vertical_alpha} for an example.

\begin{figure}[h!]
    \centering
    \def\svgwidth{0.6\linewidth}
    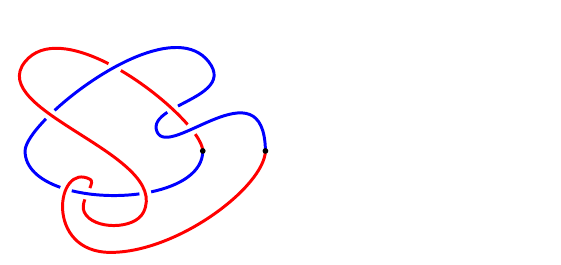
    \caption{Isotoping (in~$\mathbb S^2)$ a simplified version of the standard meander diagram from Figure~\ref{fig:standard_meander} to turn~$\alpha$ into a vertical line segment in Step~3.}
    \label{fig:vertical_alpha}
\end{figure}

\begin{figure}[h!]
    \centering
    \def\svgwidth{0.9\linewidth}
    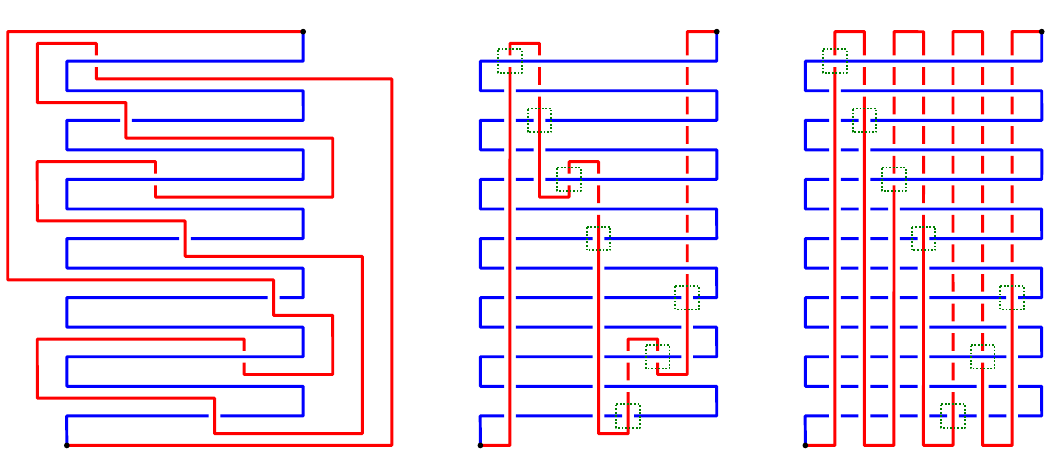
    \caption{The final stages of Step~3 of the construction of a potholder diagram, following the example in \Cref{fig:vertical_alpha}. Left: The vertical arc $\alpha$ is isotoped to form the horizontal zigzag of the target potholder diagram. Middle: The right (respectively left) sub-arcs of~$\beta$ are moved in front (respectively behind) the diagram, forming overcrossing $\cup$-shapes (respectively undercrossing $\cap$-shapes). For readability, pre-existing crossings are highlighted. Right: The cups and caps are extended to complete the potholder diagram of width~$7$ and height~$13$.}
    \label{fig:potholder_6_2}
\end{figure}

We aim to produce a potholder diagram of width~$n$ and height~$2n-1$. The first step is to isotope the picture to make~$\alpha$ the zigzagging arc containing the $2n-1$~horizontal strands of our target potholder diagram, and we do this in such a way that there is precisely one crossing at each of the~$n$ horizontal strands lying at odd height. These crossings are then adjusted horizontally, so that scanning the diagram from left to right, they appear in the order given by traversing~$\beta$. In our running example, the result looks as in \Cref{fig:potholder_6_2} (left). Observe that if one orients~$\beta$ from the bottom-left to the top-right of the diagram, then the $\beta$-strands at these crossings will alternate between pointing up and down.

Next, consider the sub-arcs of~$\beta$ lying to the right of~$\alpha$, and visualize a three-dimensional isotopy that moves them in front of the diagram, so that each projects into the vertical strip between the crossings it connects. The sub-arcs to the left of~$\alpha$ are isotoped behind the diagram in similar fashion. Since the crossings between $\alpha$ and~$\beta$ were arranged to be horizontally consecutive along~$\beta$, the projections of the newly isotoped arcs form no new crossings among themselves. Moreover, given how the orientations of the $\beta$-strands at these crossings alternate, the newly produced over-arcs will always form $\cup$-shapes and the under-arcs will form $\cap$-shapes. This results in a diagram as in \Cref{fig:potholder_6_2} (middle). Elongating these cups and caps yields the desired potholder diagram, as in \Cref{fig:potholder_6_2} (right).

It should be apparent, especially by staring at \Cref{fig:potholder_6_2} (middle), that the potholder diagram thus produced might very well be much larger than necessary. The reader may verify that the potholder diagram from our running example may be simplified to the one in \Cref{fig:6_2_simplified}.

\begin{figure}[h]
    \centering
    \def\svgwidth{0.15\linewidth}
    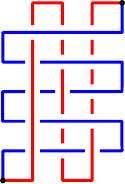
    \caption{A potholder diagram of width~$3$ and height~$5$ for the knot~$6_2$, obtained by simplifying the one in \Cref{fig:potholder_6_2}.}
    \label{fig:6_2_simplified}
\end{figure}


\subsection{All knots are trivial}

The last ingredient of our program is to convert pot\-holder diagrams into knot\-holder diagrams. The following proposition tells us we can do this even with some control on the sign tuple of the final diagram. 

\begin{prop}[Potholder to knotholder] \label{prop:potholder_to_knotholder}
    Let $m\in \NN_{\ge 1}$, let~$K$ be a knot that has a potholder diagram of width~$2m-1$, and let $\varepsilon = (\varepsilon_1,\ldots, \varepsilon_m)$ be an $m$-tuple of signs. Then $K$~has a knotholder diagram with $m$~$\cup$-shapes and sign tuple~$\varepsilon$.
\end{prop}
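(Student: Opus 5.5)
The plan is to compare the two pictures directly. A potholder diagram of width $2m-1$ has $2m-1$ vertical strands. Some of its loose ends are joined in pairs by $\cup$- and $\cap$-shapes, and the two remaining ends (one of them at a corner) connect to the zigzag of horizontal strands. A knotholder diagram with $m$ $\cup$-shapes has $2m$ vertical strands hanging from the long horizontal segment, paired by $\cap$-shapes above it, and its bottom part is a weaver diagram of width $2m$.

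The first step is to put the potholder diagram in a normal form, viewing it as a meander diagram. One arc, $\beta$, consists of the $2m-1$ vertical strands together with the connecting arcs at the top and bottom. After a planar isotopy, the second arc, $\alpha$, consists of the horizontal strands together with their side connections. The key observation concerns the arc $\alpha$. It crosses the vertical strands, passing over or under each one, and it is monotone in the vertical direction. After turning the picture sideways, $\alpha$ should therefore be a weft woven through a warp formed by the vertical strands, in the sense of a weaver tangle. More precisely, reading $\alpha$ from one change point to the other, its height increases monotonically. This matches the nondecreasing condition on wefts, so the $n_2$ horizontal passes of $\alpha$ together form the weft $\gamma$ of a weaver diagram whose warp is the vertical strands.

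The second step is to match the ends of the warp with the knotholder scheme. The remaining part of the knot is $\beta$, which closes up the vertical strands by caps at the top and cups at the bottom. In the knotholder scheme, the warp strands are joined at the top by $\cap$-shapes passing over or under the long horizontal segment. At the bottom, adjacent strands are joined by $\cup$-shapes, and the leftmost strand continues around to the left end of the horizontal segment. I will add one extra vertical strand to go from $2m-1$ to $2m$; it comes from a Reidemeister I kink, or from the end of $\beta$ at a change point. Each potholder bottom connection that does not fit the knotholder pattern will be fixed by applying a braid $[f]\in\Mcg(D,\{x_1,\dots,x_{2m}\})$ to the weaver tangle. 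This is legitimate because conjugating by a braid acting on the warp endpoints changes neither the weaver diagram form nor the knot type, provided the compensating braid is absorbed into the weft region. In practice, I will use half-twists $\sigma_i^{\pm1}$ to slide the potholder caps, which pair strands $(2,3),(4,5),\dots$ with the extra strand, onto the pairing $(1,2),(3,4),\dots$ required by the knotholder $\cap$-shapes. The corresponding isotopy pushes the discrepancy down into $W$ via the action $[f]\cdot W$.

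The third step is to achieve the prescribed sign tuple $\varepsilon$. A $\cap$-shape joining strands $2i-1$ and $2i$ crosses the horizontal segment twice, and the definition forces these two crossings to have the same sign. Both choices ($\varepsilon_i=\pm$) give a cap that lies entirely over or entirely under the horizontal strand near that spot. Flipping $\varepsilon_i$ then amounts to passing the cap around the horizontal segment. This move can be realized by an isotopy that rotates the pair of strands $2i-1$ and $2i$ by a full twist $\sigma_{2i-1}^{\pm2}$, or more simply by dragging the cap around the end of the horizontal segment, with the resulting braid again absorbed into the weaver tangle.

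The main obstacle is checking that everything absorbed into $W$ really yields a weft satisfying the monotonicity condition and avoiding the leftmost strand, rather than a general tangle. My plan is to ensure that all the braids act only on the warp, since the action $f\cdot W$ preserves weaver tangles by definition. I will also verify carefully that the long horizontal segment and the cup/cap structure of the scheme in \Cref{fig:knotholder_def} are recovered exactly.
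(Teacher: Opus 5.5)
There is a genuine gap, and it starts with a misreading of the definition. Two crossings of a $\cap$-shape with the long horizontal segment have the \emph{same} sign exactly when the cap passes over the segment at one of them and under it at the other, i.e.\ when it hooks around the arm. The paper states this right after the definition. A cap lying entirely over or entirely under the segment has crossings of \emph{opposite} signs, because its two legs are oppositely oriented. So your Step~3 starts from the wrong picture, and its mechanism also fails:
\begin{itemize}
\item a full twist $\sigma_{2i-1}^{\pm2}$ of the two legs below the segment leaves both crossings with the segment unchanged;
\item rotating the hooked cap itself would drag it through the segment;
\item ``dragging the cap around the end'' is not available, because the segment has no free end and continues into the rest of the knot.
\end{itemize}
The paper never flips a sign after the fact. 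Each sign is chosen at the moment its crossing is created.

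More seriously, nothing in the proposal produces the top part of the scheme. In a knotholder diagram the arc $\alpha$ is the weft \emph{together with} the long horizontal segment. The weft must enter and leave the box on its left side, at $(0,0,\tfrac13)$ and $(0,0,\tfrac23)$. The potholder zigzag runs from the bottom-left to the top-right corner, so it is not a weft as it stands. Declaring all of it to be the weft also leaves nothing to serve as the horizontal segment with $2m$ strands hooked over it.

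Your substitute, absorbing discrepancies into $W$ by braids, is not justified. Replacing $W$ by $[f]\cdot W$ alone changes the knot unless it is the trace of an isotopy of the whole diagram. Compensating braids inserted above or below $W$ would destroy the knotholder form there. And half-twists cannot re-pair caps, since a cap simply travels with its legs.

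The missing idea is the paper's pair of explicit moves. First, delete the top-right kink and add a Reidemeister~I loop stretched along the left side, which you do anticipate. Then drag the top-right corner into a long loop across the top to the far left, overcrossing everywhere if $\varepsilon_m=+$ and undercrossing everywhere otherwise. This loop becomes the horizontal segment, and its left-most crossing is the left-most of the final $2m$ crossings, with sign $\varepsilon_m$.

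Second, $2m-1$ times, carry the right-most vertical strand across the back or the front of the whole diagram to the spot just right of the last crossing created. Each such move:
\begin{itemize}
\item is an honest isotopy;
\item creates exactly one new crossing, whose sign is dictated by back versus front (so an equal-sign pair uses opposite sides);
\item changes the weaver region by $W_{i+1}=b_i\cdot W_i$ for an explicit braid $b_i$.
\end{itemize}
Weaver form is then automatic, since $f\times\mathrm{id}$ merely drags the weft along. So the obstacle you single out is not the real one.
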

\begin{proof}

    We will illustrate the construction of the desired knot\-holder diagram from a potholder diagram of width~$5$, so $m=3$, and with a triple $\varepsilon = (-,-,+)$. It will then be clear how the method proceeds in general. The text should be followed alongside \Cref{fig:main_proof}.

\begin{figure}[h]
    \centering
    \def\svgwidth{0.9\linewidth}
    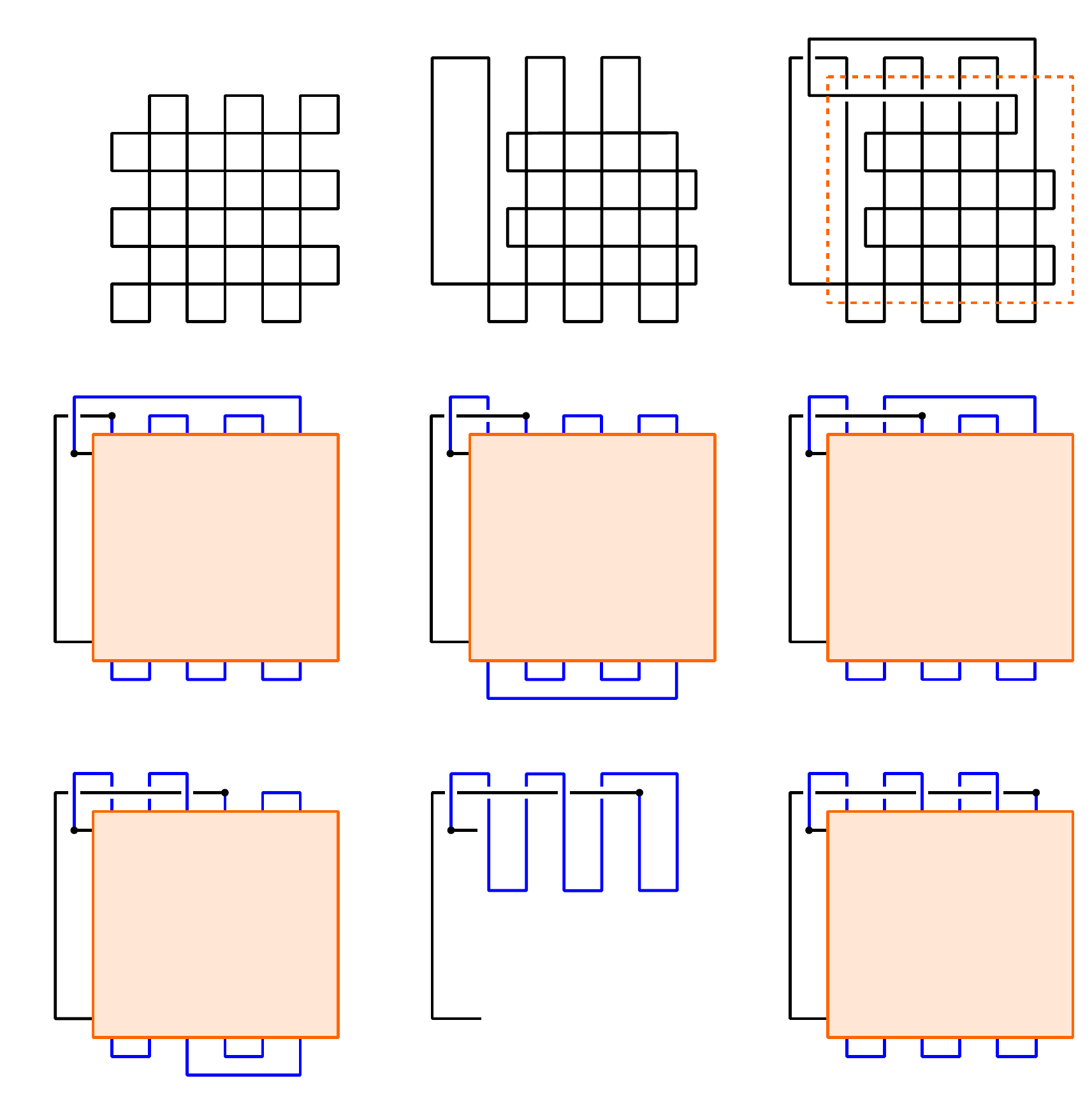
    \caption{Producing a knotholder diagram from a potholder diagram.}
    \label{fig:main_proof}
\end{figure}

    The first modification to the given potholder diagram (\Cref{fig:main_proof}~(1)) is to remove the superfluous top-right crossing. We then use a type-1 Reidemeister move on the bottom-left to produce a twist, and enlarge the resulting loop so it spans the height of the diagram, lying to its left. After a few additional minor adjustments to make room for later steps, the resulting diagram looks as in \Cref{fig:main_proof}~(2). The new crossing at the bottom-left may be taken to be either positive or negative, and the remaining ones are of course as in the original diagram.

    Next, we enlarge the region on the top right  to produce a loop extending to the left of the diagram as in \Cref{fig:main_proof}~(3). This loop is overcrossing at every intersection, reflecting the assumption that the last sign in our tuple is $\varepsilon_m = +$. In fact, the left-most crossing produced in this step will be the crossing numbered~$m$ in the knotholder diagram under construction. With this choice, we have ensured that its sign is positive. If we had $\varepsilon_m = -$, then we would have instead made this loop undercrossing at every intersection.

    \Cref{fig:main_proof}~(3) also highlights a weaver diagram~$W_1$ of width~$2m$. This region shall henceforth be blackboxed, but with each step we will ensure that it remains a width-$2m$ weaver diagram. 
    Adding some suggestive decorations to further hint at what roles each part of the diagram will play in the end, we obtain the schematic in \Cref{fig:main_proof}~(4).

    Our next goal is to produce the intersection numbered~$2m-1$ of the knot\-holder diagram, which should again have sign~$\varepsilon_m$. We do this by moving the right-most vertical strand to the position immediately to the right of the last produced intersection, as indicated by the arrow in \Cref{fig:main_proof}~(4). Again because $\varepsilon_m = +$, we move this strand across the back of the diagram, producing a positive crossing.
    Throughout this motion, other regions of the knot get dragged along; namely, we need to ponder on how $W_1$~changes. For the corresponding weaver tangle, which we also denote by~$W_1 \subset C$, this corresponds to dragging the right-most segment~$L_{2m}$ of the warp to position~$1$ along the back of the cylinder~$C$, moving all other~$L_i$ one position to the right, and dragging the weft along. More precisely, we modify~$W_1$ by acting with the element of $\Mcg(D, \{x_1, \ldots, x_{2m}\})$ represented by the braid~$b_1\in \mathcal B_{2m}$ shown in \Cref{fig:braids}. The weaver tangle $W_2 \coloneq b_1 \cdot W_1$ thus produced appears in the resulting diagram, which we show in \Cref{fig:main_proof}~(5).

    \begin{figure}[h!]
    \centering
    \def\svgwidth{0.8\linewidth}
\begingroup%
  \makeatletter%
  \providecommand\color[2][]{%
    \errmessage{(Inkscape) Color is used for the text in Inkscape, but the package 'color.sty' is not loaded}%
    \renewcommand\color[2][]{}%
  }%
  \providecommand\transparent[1]{%
    \errmessage{(Inkscape) Transparency is used (non-zero) for the text in Inkscape, but the package 'transparent.sty' is not loaded}%
    \renewcommand\transparent[1]{}%
  }%
  \providecommand\rotatebox[2]{#2}%
  \newcommand*\fsize{\dimexpr\f@size pt\relax}%
  \newcommand*\lineheight[1]{\fontsize{\fsize}{#1\fsize}\selectfont}%
  \ifx\svgwidth\undefined%
    \setlength{\unitlength}{525.82560382bp}%
    \ifx\svgscale\undefined%
      \relax%
    \else%
      \setlength{\unitlength}{\unitlength * \real{\svgscale}}%
    \fi%
  \else%
    \setlength{\unitlength}{\svgwidth}%
  \fi%
  \global\let\svgwidth\undefined%
  \global\let\svgscale\undefined%
  \makeatother%
  \begin{picture}(1,0.15001462)%
    \lineheight{1}%
    \setlength\tabcolsep{0pt}%
    \put(0,0){\includegraphics[width=\unitlength,page=1]{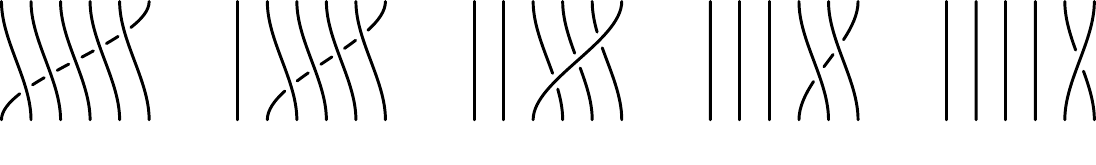}}%
    \put(0.07142826,0.00245149){\color[rgb]{0,0,0}\makebox(0,0)[t]{\lineheight{0.69999999}\smash{\begin{tabular}[t]{c}$b_1$\end{tabular}}}}%
    \put(0.28706216,0.00245149){\color[rgb]{0,0,0}\makebox(0,0)[t]{\lineheight{0.69999999}\smash{\begin{tabular}[t]{c}$b_2$\end{tabular}}}}%
    \put(0.50269606,0.00245149){\color[rgb]{0,0,0}\makebox(0,0)[t]{\lineheight{0.69999999}\smash{\begin{tabular}[t]{c}$b_3$\end{tabular}}}}%
    \put(0.71832996,0.00245149){\color[rgb]{0,0,0}\makebox(0,0)[t]{\lineheight{0.69999999}\smash{\begin{tabular}[t]{c}$b_4$\end{tabular}}}}%
    \put(0.93396387,0.00245149){\color[rgb]{0,0,0}\makebox(0,0)[t]{\lineheight{0.69999999}\smash{\begin{tabular}[t]{c}$b_5$\end{tabular}}}}%
  \end{picture}%
\endgroup%

    \caption{The braids used in successively modifying the weaver tangles $W_1, \ldots, W_{2m}$. For each $i$, we have $W_{i+1} \coloneq  b_i \cdot W_i$.}
    \label{fig:braids}
\end{figure}

    Next, we consult the second-to-last sign~$\varepsilon_{m-1}$, which in our running example instructs us to produce two negative intersections. Each of them is obtained, as before, by moving the right-most vertical strand to the location immediately to the right of the previously produced intersection. Since we want these intersections to be negative, we move the first of these strands across the back of the diagram, and the second across the front. This yields the diagrams in \Cref{fig:main_proof} (6)~and~(7), respectively, where $W_3\coloneq b_2\cdot W_2$ and $W_4 \coloneq b_3 \cdot W_3$ for the braids~$b_i$ of \Cref{fig:braids}.

    We continue in this fashion, reading the signs of~$\varepsilon$ from left to right, and each time producing a pair of intersections and modifying the weaver diagram by the appropriate braids. In our example, this amounts to dragging one more pair of strands, producing the diagrams in \Cref{fig:main_proof} (5)~and~(6), where as before $W_{i+1} \coloneq b_i\cdot W_i$. After running through the whole tuple of signs, we reach a knotholder diagram for~$K$.
\end{proof}

In practice, one does not care so much about the sign tuple in the resulting knotholder diagram; it is rather more interesting to seek ``simple'' diagrams that render the trick easier to memorize and execute. 
This is taken into account in \Cref{fig:proof_example}, where we exemplify the algorithm on a potholder diagram for our running example~$6_2$. We chose not to start from the diagram in \Cref{fig:6_2_simplified} because that would lead to too much simplification, defeating the illustrative purpose of the example. A more convenient knotholder diagram for this knot is shown in \Cref{fig:intro_list_of_small_knots}.
Combining \Cref{thm:universality_potholder} and \Cref{prop:potholder_to_knotholder} yields:

\knotholderuniversality*

\begin{rem}[Band surgery on unknots]
    In light of \Cref{rem:band_surgery}, a consequence of \Cref{thm:main_intro} is that for every knot~$K$, there is a band surgery of an unknot that produces a $2$-component link of~$K$ and another unknot. It is not difficult to see that this is equivalent to the statement that every knot~$K$ may be obtained from some band surgery on a link of two unknots. 
    In a forthcoming article \cite{AQ}, we give a vast generalization of this fact.
\end{rem}

\section*{Outlook: Are all links trivial?}\label{sec:outlook}

Following a question posed by an anonymous referee, we wonder if a troupe of \(n\)~magicians could extend the ideas of this paper to produce any \(n\)-component link from an unlink of \(n\)~unknots. For example, one could conceive of a diagram as in \Cref{fig:whitehead_link} as an instruction for how two magicians can produce a Whitehead link by a sleight of hand. We leave the exploration of a formal treatment to the intrepid knotholders.

\begin{figure}[h!]
    \centering
    \def\svgwidth{0.3\linewidth}
    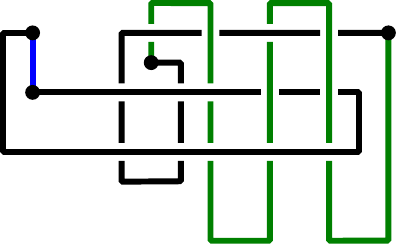
    \caption{A link diagram for the Whitehead link built out of two knotholder diagrams for the unknot. Only the performer on the left, holding the blue string, needs to execute a sleight of hand.}
    \label{fig:whitehead_link}
\end{figure}

    \begin{figure}[h!]
    \centering
    \def\svgwidth{0.9\linewidth}
    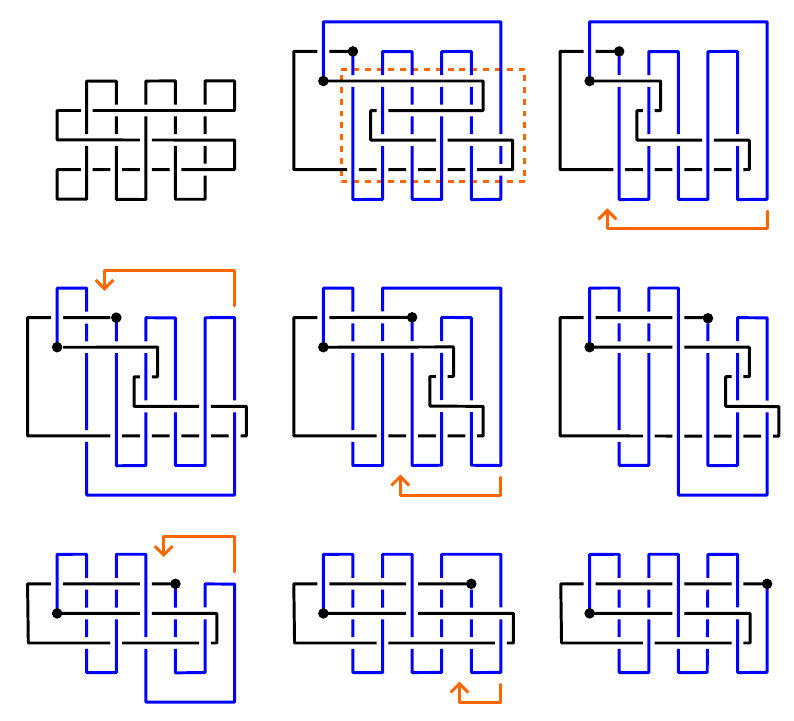
    \caption{Producing a knotholder diagram for~$6_2$ from a potholder diagram. We start with the potholder diagram obtained from the one in \Cref{fig:6_2_simplified} by reflecting along a diagonal line and flipping all crossings (1). In the preparatory steps (2), the large loop in the upper region was chosen to lie in front of the diagram, and so the last entry in the sign tuple of the knotholder diagram will be positive. With this choice, the relevant weaver diagram, which has been highlighted, allows for some simplification (3). Similarly, the diagrams in (5) and~(8) are produced from their predecessors by making an arbitrary choice (but not the ones in (4), (6) and (9)!), leading to a knotholder diagram with sign tuple~$(-,-,+)$. Note that even though the diagram in~(7) represents the same knot as the one in~(6), it is obtained by replacing the weaver diagram with a simpler one that does not represent an isotopic weaver tangle.}
    \label{fig:proof_example}
\end{figure}

\newpage 
\phantom{.}
\newpage

\printbibliography

\flushleft
---------

\textsc{Raphael Appenzeller},  \texttt{\href{mailto:rappenzeller@mathi.uni-heidelberg.de}{rappenzeller@mathi.uni-heidelberg.de}}

\textsc{José Pedro Quintanilha},   \texttt{\href{mailto:jquintanilha@mathi.uni-heidelberg.de}{jquintanilha@mathi.uni-heidelberg.de}}

\vspace{8pt}
Institut für Mathematik,
Im Neuenheimer Feld 205,

69120 Heidelberg, Germany
\end{document}